\newtheorem{theorem}{Theorem}[section]
\newtheorem{corollary}[theorem]{Corollary}
\newtheorem{example}[theorem]{Example}
\newtheorem{remark}[theorem]{Remark}
\newenvironment{proof}{\noindent {\it Proof}.}{\hfill$\Box$}
\numberwithin{equation}{section}
\def\IN{\mathbb{N}}
\def\IR{\mathbb{R}}
\def\IS{\mathbb{S}}
\def\IZ{\mathbb{Z}}
\begin{document}
\title{Periodic orbits of large diameter for circle maps
\footnotetext{{\it 2010 Mathematics Subject Classification.} Primary 37E10; Secondary 37E15.}
\footnotetext{  }
\footnotetext{Proceedings of the American Mathematical Society, 138, No 9, 3211-3217, 2010.}
}
\author{Llu\'{\i}s Alsed\`a and Sylvie Ruette}

\date{}
\maketitle
\begin{abstract}
Let $f$ be a continuous circle map and let $F$ be a lifting of $f$. In
this note we study how the existence of a large orbit for $F$ affects
its set of periods. More precisely, we show that, if $F$ is of degree
$d\geq 1$ and has a periodic orbit of diameter larger than 1, then $F$
 has periodic points of period $n$ for all integers $n\geq 1$, and
thus so has $f$. We also give examples showing that this result does
not hold when the degree is non positive.
\end{abstract}

%*********************************************
\section{Introduction}
One of the basic problems in topological dynamics in one dimension is
the characterization of the sets of periods of all periodic points.
This problem has its roots and motivation in Sharkovskii theorem
\cite{Sha}.
A lot of effort has been spent in generalizing Sharkovskii theorem for
more and more general classes of continuous self maps on trees, and
finally the characterization of the set of periods of general tree
maps is given in \cite{AJM3}.
While the set of periods of tree maps can be described with a finite
number of orderings, circle maps display new features. The set of
periods of a continuous circle map depends on the degree of the map
(see, e.g., \cite{ALM}). Consider a continuous map $f\colon\IS\to
\IS$, where $\IS=\IR/\IZ$, and $F$ a lifting of $f$, that is, a
continuous map $F\colon\IR\to\IR$ such that $f\circ\pi=\pi\circ F$,
where $\pi\colon \IR\to \IS$ is the canonical projection ($F$ is
uniquely defined up to the addition of an integer). The degree of $f$
(or $F$) is the integer $d\in\IZ$ such that $F(x+1)=F(x)+d$ for all
$x\in\IR$. If $|d|\geq 2$, then the set of periods is $\IN$ (the case
$\IN\setminus\{2\}$ is also possible when $d=-2$). If $d=0$ or $d=-1$,
then the possible sets of periods are ruled by Sharkovskii order, as
for continuous interval maps. The case $d=1$ is the most complex one
and requires the rotation theory. Let $F$ be a lifting of a degree 1
circle map $f$. The rotation number of a point $x\in\IR$ is
$\rho_{_F}(x)=\lim_{n\to+\infty}\frac{F^n(x)-x}{n}$, when the limit
exists. The set of all rotation numbers is a compact interval $[a,b]$,
and the set of periods of $f$ contains
\[
\{q\in\IN\mid \exists\, p\in\IZ,\ a<\frac{p}{q}<b\}.
\]
This comes from the knowledge of the set of periods of periodic points
with a given rotation number, which can be reduced from the study of
periods of points of rotation number 0.

In this note, we show that the set of periods of a lifting $F$ of a
circle map $f$ of degree $d\geq 1$ is $\IN$ if $F$ has a periodic
orbit of diameter larger than 1. This result obviously projects on the
circle: if such a periodic orbit exists for $F$ (for $f$, this means
that the periodic orbit ``spreads'' on more than one turn on the
circle), then the set of periods of $f$ is $\IN$. Our result improves
the well known fact that the set of periods of $f$ is $\IN$ for $d \ge
2$. Indeed, when a large orbit exists, it shows that there is a
subclass of orbits of $f$ (namely those which come from a true
periodic orbit of a lifting $F$) whose set of periods already contains
$\IN$. This study, in addition to its own interest, is mainly
motivated by the case $d=1$ because it might shed some light on the
characterization of the set of periods of maps of degree 1 on
topological graphs containing a loop. In particular the graph shaped
like the letter $\sigma$ (an interval glued to a circle). For liftings
of maps of the graph $\sigma$, it seems that periodic orbits of
rotation number 0 of ``large'' diameter force all periods greater than
or equal to 2. When the branching point of $\sigma$ is fixed, the
possible sets of periods are known \cite{LL}. On the other hand, a
rotation theory has been developed for continuous self maps on
topological graphs with a unique loop in \cite{AR}, and the rotation
set of a $\sigma$ map is studied in \cite{R9}, which is a first step
in the comprehension of the case of graph maps of degree 1.

\section{Statement and proof of the result}

Let $F\colon\IR\to \IR$ be a continuous map. A point $x\in\IR$ is
\emph{periodic} (for $F$) if there exists an integer $n\geq 1$ such
that $F^n(x)=x$. The \emph{period} of $x$ is the least integer $n$
with this property, that is, $F^n(x)=x$ and $F^i(x)\neq x$ for all
$1\leq i\leq n-1$. A \emph{periodic orbit} is the orbit of some
periodic point $x$, that is, $\{F^i(x)\mid i\geq 0\}$, which is a
finite set. A set $A\subset\IR$ is  \emph{$F$-invariant} if
$F(A)\subset A$. Clearly, the only non empty $F$-invariant subset of a
periodic orbit $P$ is $P$ itself.

\begin{remark}
Let $F$ be a lifting of a circle map $f\colon \IS\to \IS$. Then a
point $\pi(x)\in\IS$ is periodic for $f$ if and only if $x$ is
periodic (mod 1) for $F$, that is, $\exists n\geq 1, k\in\IZ, F^n(x)=
x+k$. If in addition $f$ is of degree 1, then $\rho_F(x)=k/n$, and
thus the periodic points of $F$ are exactly the periodic (mod 1)
points of rotation number 0.
\end{remark}

Now we state the main result of this note.

\begin{theorem}\label{prop:periodsF}
Let $F\colon\IR\to\IR$ be a continuous map which is the lifting of a
circle map of degree $d\geq 1$. Assume that $F$ has a periodic orbit
$P$ of period $n$ such that $\max P-\min P>1$. If $d=1$, then the
rotation interval of $F$ contains the interval
$[-\tfrac{1}{n},\tfrac{1}{n}]$ and, consequently, $F$ has periodic
points of all periods. If $d \ge 2$ then, $F$ also has periodic points
of all periods.
\end{theorem}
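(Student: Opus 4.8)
The plan is to exploit the hypothesis $\max P - \min P > 1$ to locate, for the lifting $F$, two points of the orbit $P$ whose images under $F$ straddle a full integer translate, and from this manufacture periodic points of every rotation number in a symmetric interval around $0$. Since $P$ has period $n$, write $P=\{x_0,\dots,x_{n-1}\}$ with $F(x_i)=x_{i+1 \bmod n}$. Let $x_m=\min P$ and $x_M=\max P$, so $x_M-x_m>1$. The first step is to observe that because $F$ has degree $d\geq 1$, it is nondecreasing ``in the large'' via the relation $F(x+1)=F(x)+d$; in particular for $d=1$ the lift commutes with integer translations, $F(x+1)=F(x)+1$, which is exactly the structure that rotation theory requires.

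\medskip

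For the degree-one case I would aim to show $[-\tfrac{1}{n},\tfrac{1}{n}]$ lies in the rotation interval $[a,b]$ of $F$. The idea is to produce periodic (mod $1$) orbits realizing rotation numbers $\pm\tfrac1n$. Consider the point $x_m=\min P$; applying $F$ exactly $n$ times returns to $x_m$, i.e.\ $F^n(x_m)=x_m$. Now translate: because some orbit point $x_M$ satisfies $x_M>x_m+1$ and $x_M=F^{k}(x_m)$ for some $0\le k<n$, the orbit climbs by more than one unit within $n$ iterates. I would use this to find $y$ with $F^n(y)=y+1$ (giving rotation number $\tfrac1n$) by an intermediate-value argument: the function $x\mapsto F^n(x)-x$ takes a value $\geq 1$ somewhere along $P$ (near where the orbit has risen past $x_m+1$) and the value $0$ at $x_m$ itself, so by continuity $F^n(x)-x=1$ has a solution. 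Symmetrically, exploiting $\min$ versus $\max$ and the translate $P-1$, one locates a point with $F^n(x)-x=-1$, giving rotation number $-\tfrac1n$. Since the rotation set is the closed interval $[a,b]$ and now contains both $\tfrac1n$ and $-\tfrac1n$, it contains $[-\tfrac1n,\tfrac1n]$; the quoted result from the introduction (the set of periods contains every $q$ admitting $a<p/q<b$) then forces all periods, since for each $q\ge1$ one can pick $p=0$ with $-\tfrac1n<0<\tfrac1n$ when $q>n$, and handle small $q$ directly from the endpoints.

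\medskip

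For $d\ge2$ the statement that all periods occur is already classical, as noted in the introduction, so the role of the large-diameter orbit here is secondary; I would simply invoke that $|d|\ge2$ forces the set of periods to be $\IN$, or alternatively adapt the same intermediate-value construction, which is even easier because the expansion built into $F(x+1)=F(x)+d$ with $d\ge2$ automatically produces points with $F^n(x)-x$ taking a wide range of integer values.

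\medskip

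The main obstacle I anticipate is the careful bookkeeping in the intermediate-value step: one must verify that $F^n(x)-x$ genuinely attains the value $1$ (and $-1$) on an interval where $F^n$ is controlled, which requires translating the orbit $P$ by integers and comparing $P$ with $P\pm1$ to guarantee the sign change. The subtlety is that $F^n$ need not be monotone, so I cannot argue purely from endpoint values of $F^n(x)-x$ on $[\min P,\max P]$; instead I expect to track a specific orbit point $x_j$ at which the accumulated displacement exceeds $1$, and use the monotonicity of the degree-one lift together with $F(x+1)=F(x)+1$ to pin down the crossing. Making this sign-change rigorous—ensuring the existence of a genuine periodic point of rotation number exactly $\pm\tfrac1n$ rather than merely a point whose orbit rises by one unit—is where the real work lies.
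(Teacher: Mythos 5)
Your proposal has two genuine gaps, one in each case.

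For $d=1$: your entire argument hinges on finding a point where $F^n(x)-x\ge 1$, to be played against $F^n(\min P)-\min P=0$ in an intermediate-value argument. But the justification you give --- that $F^n(x)-x$ ``takes a value $\geq 1$ somewhere along $P$'' --- is false: every point of $P$ is an honest fixed point of $F^n$, so $F^n(x)-x$ vanishes identically on $P$. The required point must be found \emph{off} the orbit, and that is precisely the step you defer to the end (``where the real work lies''), so the proposal has a hole at its center. The hole is fillable along your lines: with $p=\min P$ and $q=\max P=F^k(p)$, $k<n$, note that $F^{n-k}(p)\in P$ gives $F^{n-k}(p)\ge p$, while $F^{n-k}(q-1)=F^{n-k}(q)-1=F^n(p)-1=p-1<p$; the intermediate value theorem then yields $c\in[p,q-1]$ with $F^{n-k}(c)=p$, whence $F^n(c)=F^k(p)=q\ge c+1$, and a second application of the IVT produces $y$ with $F^n(y)=y+1$. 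But some such argument must actually be supplied, and your stated tool for pinning down the crossing, ``the monotonicity of the degree-one lift,'' does not exist: degree-one liftings are not monotone. This is exactly why the paper works instead with the monotone truncation $F_u(x)=\sup\{F(y)\colon y\le x\}$, whose rotation number is well defined and equals the right endpoint of the rotation interval of $F$; the orbit estimate $F_u^{k\ell}(p)>p+\ell$ then gives that endpoint $\ge 1/k>1/n$ with no sign-change analysis at all.

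For $d\ge2$ the proposal fails outright. The theorem asserts that the lifting $F$ itself, as a self-map of $\IR$, has periodic points of all periods. The classical fact you invoke (degree $|d|\ge 2$ forces the set of periods of the circle map $f$ to be $\IN$) concerns periodic \emph{mod 1} points of $F$ and is strictly weaker; it cannot imply the statement, because without the large-orbit hypothesis the statement is false: $F(x)=dx$ is a lifting of degree $d\ge2$ whose only periodic point is the fixed point $0$. So the large orbit is not ``secondary'' --- it is essential. Your fallback (points where $F^n(x)-x$ hits many integer values) also misses the target, since for $d\ge2$ a point with $F^n(y)=y+k$, $k\ne 0$, is not a periodic point of $F$ and there is no rotation theory to convert it into one. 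The paper's proof uses the large orbit to build two intervals $I=[r,s]$ and $J=[s,F(r)]$ inside the convex hull of $P$ with $F(I)\supset I\cup J$ and $F(J)\supset I$, and extracts points of every least period from this covering structure; some argument of that kind, producing genuine fixed points of $F^\ell$ of least period $\ell$, is unavoidable in this case.
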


\begin{proof}
We consider separately the cases $d=1$ and $d \ge 2$.

Assume first that $d=1$. Set $p := \min P$ and let $k < n$ be
the positive integer such that $F^k(p) = \max P > p+1$. Let
\[
 F_u(x) := \sup\{f(y) \colon y \le x\}.
\]
From \cite[Proposition~3.7.7(d)]{ALM} it follows that $F_u$ is
continuous, non-decreasing and has degree one (that is, $F_u(x+1) =
F_u(x)+1$ for every $x \in \IR$). Moreover, if we use
\cite[Proposition~3.7.7(a)]{ALM} and the fact that $F_u$ is non decreasing,
we get $F_u^i(x)\ge F^i(x)$ for all $x\in \IR$ and $i\geq 1$, and hence
$F_u^k(p) > p+1$.

Assume that $F^{k\ell}_u(p) > p+\ell$ for some $\ell\in\IN$. Then, by
\cite[Proposition~3.1.7(c)]{ALM},
\[
F^{k(\ell+1)}_u(p) = F^k_u(F^{k\ell}_u(p)) \ge F^k_u(p+\ell) =
F^k_u(p) +\ell > p + (\ell + 1).
\]
Hence, $F^{k\ell}_u(p) > p+\ell$ for every $\ell > 0$ and,
consequently,
\[
\limsup_{j\to+\infty}\frac{F_u^j(p)-p}{j} \ge
\limsup_{\ell\to+\infty}\frac{F_u^{k\ell}(p)-p}{k\ell} \ge
\tfrac{1}{k} > \tfrac{1}{n}.
\]
On the other hand, since $F_u$ is non-decreasing, \cite[Theorem~1]{RT}
implies that $\rho_{_{F_u}}(x)$ exists for each $x \in \IR$ and is
independent of the choice of the point $x$. This number is called
the \emph{rotation number of $F_u$} and denoted by $\rho(F_u)$.
From above it follows that $\rho(F_u) = \rho_{_{F_u}}(p) >
\tfrac{1}{n}$. Then, in view of \cite[Theorem~3.7.20(a)]{ALM} it
follows that the right endpoint of the rotation interval of $F$ is
larger than $\tfrac{1}{n}$. In a similar way (using $\max P$ instead
of $\min P$) it follows that the left endpoint of the rotation
interval of $F$ is smaller than $-\tfrac{1}{n}$. Thus, the theorem in
the case $d=1$ follows from \cite[Lemma~3.9.1]{ALM}.

Now we consider the case $d\ge 2$. As above we set $p := \min P$, and
$q := \max P > p+1$. Since the $F$-orbit of $P$ is periodic, $F^j(p)
\ge p$ for every $j \ge 0$. So, by \cite[Proposition~3.1.7(c)]{ALM},
the sequence $\{F^j(p+1)\}_{j=0}^\infty$ is contained in $(p,+\infty)$
and diverges to $+\infty$ (in particular, $F^j(p+1) \ne q$ for every
$j$). Since $p+1 < q$ there exists $m \ge 0$ such that $p <
r:=F^m(p+1) < q$ but $F^j(r) > q$ for every $j > 0$. Since $P
\not\subset [r,+\infty)$, there exists $s \in P$ such that $q \ge s >
r$ but $F(s) < r$. Set $I=[r,s]$ and $J=[s,F(r)]$. Then $F(I)\supset
I\cup J$ and $F(J)\supset I$. It is well known that in this situation,
there exist periodic points of period $\ell$ for every integer
$\ell\geq 1$. To give a precise proof, we use \cite[Corollary
1.2.8]{ALM}: for $\ell=1$, we use $F(I)\supset I$; and for all
$\ell\geq 2$, we get that there exists $x\in J$ such that
$F^\ell(x)=x$ and $F^i(x)\in I$ for all $1\leq i\leq \ell-1$, which
implies that $x$ is periodic of period $\ell$ (indeed, if $F^i(x)=x$
for some $1\leq i \leq \ell-1$ then $x\in I\cap J$, which is
impossible because $F(s)\not\in I\cup J$). This ends the proof of the
theorem.
\end{proof}

\begin{remark}
A simple generalization of the above theorem and its proof for the
case $d=1$ is the following. Assume that $F$ has periodic orbits
$P_1, P_2, \dots, P_j$ such that the set $\bigcup_{i=1}^j
\langle P_i \rangle$ is connected and has diameter larger than one,
where $\langle P_i \rangle$ denotes the \emph{convex hull of $P_i$}
(that is, the smallest closed interval containing $P_i$).
By ordering the periodic orbits (and possibly withdrawing some of them), we may
assume that $\min P_{i+1}\le \max P_i$ for all $1\le i\le j-1$.
Let $|P_i|$ denote the period of $P_i$.
For each $1\le i\le j$, there exists a positive integer $k_i< |P_i|$
such that $F^{k_i}(\min P_i)=\max P_i$. Let $p:=\min P_1=\min
\bigcup_{i=1}^j P_i$. Using the facts that $F_u$ is non decreasing and
$F_u^k(x)\geq F^k(x)$ for all $x\in \IR$ and all $k\ge 1$ (see
the proof of Theorem~\ref{prop:periodsF}), we get
\begin{eqnarray*}
F_u^{k_1}(p)&\ge&\max P_1\ge \min P_2\\
F_u^{k_1+k_2}(p)&\ge&F_u^{k_2}(\min P_2)\ge \max P_2\ge \min P_3\\
&\vdots&\\
F_u^{k_1+\ldots+k_j}(p)&\ge&\max P_j=\max \bigcup_{i=1}^j P_i>p+1.
\end{eqnarray*}
Then, in a
similar way as in the proof of Theorem~\ref{prop:periodsF}, it is
possible to show that, for every $\ell > 0$,
\[
 F^{m\ell}_u(p) > p+\ell
\]
where $m=k_1+\cdots k_j <n:=\sum_{i=1}^j |P_i|$. Consequently,
the rotation interval of $F$ contains the non-degenerate interval
$[-\tfrac{1}{n}, \tfrac{1}{n}]$. Thus, $F$ has periodic
points of all periods.
\end{remark}

The next corollary is a straightforward consequence of
Theorem~\ref{prop:periodsF}.

\begin{corollary}
Let $f\colon \IS\to \IS$ be a continuous circle map of degree $d\geq
1$ and $F$ a lifting of $f$. If there exists a periodic orbit $P$ for
$F$ such that $\max P-\min P>1$ then $f$ has periodic points of all
periods.
\end{corollary}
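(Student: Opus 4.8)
The plan is to deduce the corollary from Theorem~\ref{prop:periodsF} by pushing the periodic points of $F$ down to the circle via $\pi$, the only subtlety being that the projection could \emph{a priori} shorten the period. First I would invoke Theorem~\ref{prop:periodsF}: since $F$ is a lifting of $f$ of degree $d\ge1$ and has a periodic orbit $P$ with $\max P-\min P>1$, the theorem guarantees that $F$ has a periodic point of period $\ell$ for every integer $\ell\ge1$. Fix such an $\ell$ and a point $x$ with $F^\ell(x)=x$ and $\ell$ minimal. Then $\pi(x)\in\IS$ is periodic for $f$ because $f^\ell(\pi(x))=\pi(F^\ell(x))=\pi(x)$, and its $f$-period $m$ divides $\ell$. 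It remains to show $m=\ell$.

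The heart of the argument is a displacement computation that uses the hypothesis $d\ge1$ in an essential way. By the Remark preceding the theorem, $m$ being the $f$-period of $\pi(x)$ means $F^m(x)=x+j$ for some integer $j$, with $m$ minimal. Since $F$ has degree $d$, the iterate $F^m$ has degree $d^m$, that is, $F^m(y+c)=F^m(y)+c\,d^m$ for every $c\in\IZ$; iterating this relation $\ell/m$ times starting from $F^m(x)=x+j$ yields
\[
 F^\ell(x)=x+j\sum_{t=0}^{\ell/m-1}d^{tm}.
\]
Because $d\ge1$, the sum on the right is a positive integer (it is at least the $t=0$ term), so $F^\ell(x)=x$ forces $j=0$. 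Hence $F^m(x)=x$, and the minimality of $\ell$ gives $m\ge\ell$; combined with $m\mid\ell$ this yields $m=\ell$. Therefore $\pi(x)$ is an $f$-periodic point of period exactly $\ell$, and letting $\ell$ range over $\IN$ shows that $f$ has periodic points of all periods.

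The step I expect to be the main (and really the only) obstacle is exactly this period-preservation claim: without some input, the projection $\pi$ could in principle collapse a long $F$-orbit onto a shorter $f$-orbit, precisely when the orbit wraps around the circle. The point of the computation is that the degree condition $d\ge1$ makes the per-period displacements accumulate with the same sign (the sum $\sum d^{tm}$ is strictly positive and cannot cancel), so a genuine return $F^\ell(x)=x$ is incompatible with a nonzero net displacement over a shorter cycle. For $d=1$ one could alternatively read the conclusion off directly from the rotation-interval part of Theorem~\ref{prop:periodsF}, since the quoted description of the set of periods in terms of rationals strictly inside the rotation interval already refers to $f$; but the displacement argument above has the advantage of covering $d=1$ and $d\ge2$ uniformly.
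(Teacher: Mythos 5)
Your proposal is correct and matches the paper's (implicit) argument: the paper simply declares the corollary a straightforward consequence of Theorem~\ref{prop:periodsF}, i.e., one projects the periodic points of $F$ to the circle. Your displacement computation $F^\ell(x)=x+j\sum_{t=0}^{\ell/m-1}d^{tm}$, showing that a genuine fixed point of $F^\ell$ cannot project to an $f$-orbit of strictly shorter period when $d\geq 1$, is exactly the routine verification the authors left to the reader, and it is carried out correctly.
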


The conclusion of Theorem~\ref{prop:periodsF} does not hold when
the degree $d$ is non positive. For $d=-1$, $F(x)=-x$ gives a trivial
counter-example. The cases $d=0$ and $d\leq -2$ are treated
in Examples~\ref{ex:0} and \ref{ex:-d}, respectively.

\begin{example}\label{ex:-d}
Let $d$ be an integer, $d\geq 2$, and let
$\widetilde{F}\colon [0,1]\to\IR$ be the map defined by (see
Figure~\ref{fig:Fd}):
\[
 \widetilde{F}(x):=\begin{cases}
        (3-4d)x & \text{if $x \in [0,1/4]$,}\\
        (2d-3)x+\frac{3(1-d)}{2} & \text{if $x \in [1/4,3/4]$,}\\
        (3-4d)x+3(d-1) & \text{if $x \in [3/4,1]$.}
       \end{cases}
\]
Observe that
\begin{align*}
& (2d-3)\tfrac{1}{4}+\tfrac{3(1-d)}{2}
   = \tfrac{3}{4} - d
   = (3-4d)\tfrac{1}{4},\quad\text{and}\\
& (2d-3)\tfrac{3}{4}+\tfrac{3(1-d)}{2}
   = - \tfrac{3}{4}
   = (3-4d)\tfrac{3}{4}+3(d-1).
\end{align*}
Therefore, $\widetilde{F}$ is continuous, $\widetilde{F}(0)=0,$
$\widetilde{F}(\tfrac{1}{4})=\tfrac{3}{4} - d,$
$\widetilde{F}(\tfrac{3}{4}) = -\tfrac{3}{4}$ and
$\widetilde{F}(1)=-d.$ Observe that $\widetilde{F}(x) = -x$ for
$x\in\{0,\tfrac{3}{4}\}$ and $\widetilde{F}(x) < -x$ for $x\in
[0,1]\setminus\{0,\tfrac{3}{4}\}$. Moreover, it is a straightforward
computation to show that $\widetilde{F}(x) + d =-\widetilde{F}(1-x)$
by considering separately the cases $x \in [0,1/4] \cup [3/4,1]$ and
$x \in [1/4,3/4]$.

Now consider the map $F\colon \IR\to\IR$ defined by $F(x) :=
\widetilde{F}(x - \lfloor x \rfloor) -d\lfloor x \rfloor$, where
$\lfloor x \rfloor$ denotes the integer part of $x$ (see
Figure~\ref{fig:Fd}).
\begin{figure}[htb]
\begin{center}
%\vspace*{2em}
\includegraphics[height=0.64\textheight]{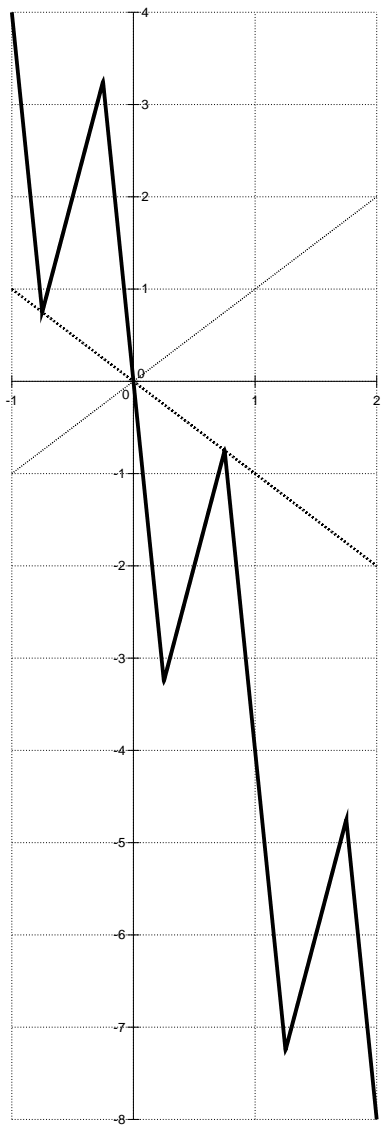}
\end{center}
\caption{The graphs of the map $F$ with $d = 4$ and $y=-x$
(dotted line) in the interval $[-1,2].$
\label{fig:Fd}}
\end{figure}

Clearly $F$ is a lifting of a continuous map of the circle of degree
$-d$ (in particular, $F(x+k) = F(x) -dk$ for every $x \in \IR$ and $k
\in \IZ$). Moreover, $F$ is odd. To see it take $x \in \IR$ and write
$x = \lfloor x \rfloor + \widetilde{x}$ with $\widetilde{x} \in
[0,1)$. Then,
\begin{align*}
 - F(-x)
 & = - F(- \widetilde{x} -\lfloor x \rfloor)
   = - F(-( \lfloor x \rfloor + 1) + (1 - \widetilde{x}))\\
 & = -F(1-\widetilde{x}) -d(\lfloor x \rfloor + 1)
   = -\widetilde{F}(1-\widetilde{x}) -d(\lfloor x \rfloor + 1)\\
 & = \widetilde{F}(\widetilde{x})+d -d(\lfloor x \rfloor + 1)
   = F(\widetilde{x}) -d\lfloor x \rfloor
   = F(\widetilde{x} + \lfloor x \rfloor) = F(x).
\end{align*}

From above it follows that $F(0) = 0,$ $F(\tfrac{3}{4}) =
-\tfrac{3}{4}$ and $F(-\tfrac{3}{4}) = \tfrac{3}{4}$. Hence, $0$ is a
fixed point of $F$ whereas $\{-\tfrac{3}{4},\tfrac{3}{4}\}$ is a
periodic orbit of $F$ of period 2 with diameter larger than one. To
end this example we will show that $F$ has no other periodic
points.

We claim that $|F(x)| > |x|$ for all $x \in \IR
\setminus\{-\tfrac{3}{4},0,\tfrac{3}{4}\}$. When $x \in [0,1]$ this
amounts to showing that $F(x) < -x$ whenever $x \notin
\{0,\tfrac{3}{4}\}$ and this follows from our remarks on
$\widetilde{F}$. When $x \ge 1$ we have $\lfloor x \rfloor +1 > x \ge
\lfloor x \rfloor \ge 1$ and, hence,
\[
 F(x) = F(x-\lfloor x \rfloor) -d\lfloor x \rfloor \le
- \lfloor x \rfloor - 1 < -x.
\]
The case $x < 0$ follows from the fact that $F$ is odd. This ends the
proof of the claim.

From the above claim it follows that if $x \in \IR$ is not
a preimage of $0$ or $\tfrac{3}{4}$ under some iterate of $F$, then $|x| <
|F(x)| < |F^2(x)| < \cdots$ and thus it cannot be periodic. Hence, $F$
has no periodic points other than $\{-\tfrac{3}{4},0,\tfrac{3}{4}\}$.
\end{example}

\begin{example}\label{ex:0}
We define a (continuous) lifting of circle map of degree 0
$F\colon\IR\to\IR$ as follows. First we choose $p \ge 3$ odd and
points
$x_0,x_1,\dots,x_p$ and $z_0,z_1,\dots,z_p$ in $\IR$ such that
\begin{multline*}
x_0 < z_0-1 < x_p < z_{p-1} < x_{p-2} < z_{p-3} < \dots < x_3 <
          z_2 < x_1 < \\
z_1 < x_2 < z_3 < \dots x_{p-3} < z_{p-2} < x_{p-1} <
      z_p < x_0+1 < z_0.
\end{multline*}
Set $P := \{x_0,x_1,\dots,x_p,z_0,z_1,\dots,z_p\}$ and $\widetilde{P}
:= P \cup \{z_0-1, x_0+1\}$. Then we define $F$ so that $F(x_i) =
x_{i+1}$ and $F(z_i) = z_{i+1}$ for $i=0,1,\dots,p-1$, $F(x_p) = z_0$,
$F(z_p) = x_0$, $F$ is affine in the closure of every connected
component of $[x_0,z_0]\setminus \widetilde{P}$ and furthermore we
impose that $F(x+1) = F(x)$ for every $x \in \IR$ (in particular
$F(z_0-1) = F(z_0) = z_1$ and $F(x_0+1) = F(x_0) = x_1$) (see
Figure~\ref{exdeg0} for an example with $p=3$).
\begin{figure}[htb]
\centerline{\includegraphics{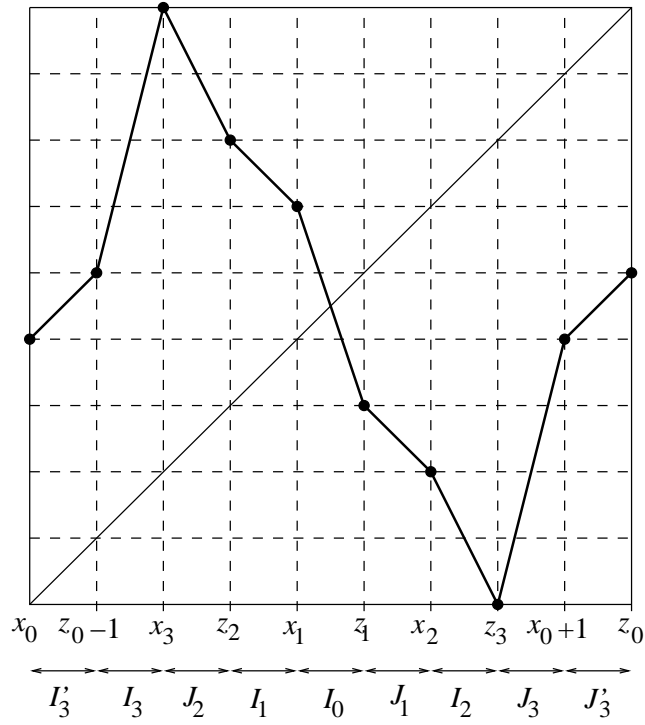}}
\caption{Graph of $F$ for $p=3$.
\label{exdeg0}}
\end{figure}

Clearly, the above conditions define a continuous function from $\IR$
to itself that is the lifting of a circle map of degree 0.
Moreover, $P$ is a periodic orbit of $F$ of period $2p+2$  and this
orbit is large since $\max P = z_0 > x_0+1 = \min P +1$. We will show
that $F$ has no periodic orbits of period $3,5,\dots,p$. Thus, $F$
does not have periodic points of all periods.

To show our claim we will compute the Markov graph of the map $F$ and
show that it has no loops of the specified length. We observe that,
by definition, $F(\IR) = [x_0,z_0]$. So we only have to consider the
graph on the finitely many vertexes contained in $[x_0,z_0]$.
To this end, we define the intervals $I_0 := [x_1,z_1]$,  $I_i :=
\langle x_i,z_{i+1}\rangle$ and $J_i= \langle z_i,x_{i+1}\rangle$ for
$i=1,2,\dots,p-1$ (where $\langle a,b\rangle$ denotes either $[a,b]$
or
$[b,a]$ depending on the order of $a,b$),
$I_p := [z_0-1, x_p]$, $J_p := [z_p,x_0+1]$, $I'_p
:= [x_0, z_0-1]$, $J'_p := [x_0+1,z_0]$. Then $F$ is a Markov map with
respect
to this partition and its Markov graph has exactly the following
arrows:

\begin{itemize}
 \item $I_0 \longrightarrow I_0$,
 \item $I_0 \longrightarrow I_1 \longrightarrow I_2 \longrightarrow
       \dots \longrightarrow I_{p-1}
       \begin{smallmatrix} \nearrow\\ \searrow\end{smallmatrix}
       \begin{array}{l} I'_p \longrightarrow I_0 \\ I_p \end{array}$,
\item $I_0 \longrightarrow J_1 \longrightarrow J_2 \longrightarrow
\dots \longrightarrow J_{p-1}
      \begin{smallmatrix} \nearrow\\ \searrow\end{smallmatrix}
       \begin{array}{l} J'_p \longrightarrow I_0 \\ J_p \end{array}$,
\item $I_p \longrightarrow K$ for all $K\in\{J_1,J_3,\dots,J_p,J'_p,
I_2,I_4,
\dots,I_{p-1}\}$,
\item $J_p\longrightarrow K$ for all $K\in\{I_1,I_3,\dots,I_p,I'_p,
J_2,J_4,\dots,J_{p-1}\}$.
\end{itemize}
By direct inspection one can see that in the above graph any loop
contains either $I_0$, $I_p$ or $J_p$. Moreover the loops not
containing $I_0$ are all of even length. The shorter simple
loops of odd length greater than 1 are exactly
the following four loops of length $p+2$:
\begin{itemize}
\item $I_0 \longrightarrow I_1 \longrightarrow I_2 \longrightarrow
       \dots \longrightarrow I_{p-1} \longrightarrow I'_p
       \longrightarrow I_0 \longrightarrow I_0$,
\item $I_0 \longrightarrow I_1 \longrightarrow I_2 \longrightarrow
       \dots \longrightarrow I_{p-1} \longrightarrow I_p
        \longrightarrow J'_p \longrightarrow I_0$,
\item $I_0 \longrightarrow J_1 \longrightarrow J_2 \longrightarrow
       \dots \longrightarrow J_{p-1} \longrightarrow J'_p
       \longrightarrow I_0 \longrightarrow I_0$,
\item $I_0 \longrightarrow J_1 \longrightarrow J_2 \longrightarrow
       \dots \longrightarrow J_{p-1} \longrightarrow J_p
        \longrightarrow I'_p \longrightarrow I_0$.
\end{itemize}
Consequently the Markov graph of $F$ has no loops of lengths
$3,5,\dots,p$ and, by \cite[Lemma 1.2.12]{ALM}, the map $F$ cannot have
periodic points of any of these periods.
\end{example}

\section*{Acknowledgments}
We thank the anonymous referee for detailed and clever comments that helped
us improve this article.

\noindent
{\scshape Llu\'{\i}s Alsed\`a}\footnote{Partially supported by MEC grant number MTM2008-01486.} -- Departament de Matem\`{a}tiques,
Edifici Cc, Universitat Aut\`{o}noma de Barcelona,
08913 Cerdanyola del Vall\`es, Barcelona,
Spain\\ 
{\it E-mail address:} {\tt alseda@mat.uab.cat}

\medskip\noindent
{\scshape Sylvie Ruette} -- Laboratoire de Math\'ematiques,
CNRS UMR 8628, B\^atiment 425,
Universit\'e Paris-Sud 11,
91405 Orsay cedex,
France\\
{\it Email address:} {\tt sylvie.ruette@math.u-psud.fr}

\end{document}